\documentclass{amsart}
%\linespread{2}
%
%
% THEOREM Environments (Examples)-----------------------------------------
%

\usepackage{amsfonts}
\usepackage{amsmath,amscd}
\usepackage{amssymb}
\usepackage{amsthm}
\usepackage{newlfont}
\newcommand{\f}{\frac}

 \newtheorem{thm}{Theorem}[section]
 \newtheorem{cor}[thm]{Corollary}
 \newtheorem{lem}[thm]{Lemma}
 
 \theoremstyle{definition}
 
 \theoremstyle{remark}

 \numberwithin{equation}{section}
 %\numberwithin{example}{section}

\begin{document}

\title[Structure of nilpotent Lie algebra by its multiplier]
 {Structure of nilpotent Lie algebra by its multiplier}

\author[P. Niroomand]{Peyman Niroomand}

\email{niroomand@dubs.ac.ir}
\address{School of Mathematics and Computer Science\\
Damghan University of Basic Sciences, Damghan, Iran}

\thanks{\textit{Mathematics Subject Classification 2010.} Primary 17B30; Secondary 17B60, 17B99.}

%\subjclass{}

\keywords{}

\date{\today}

%\dedicatory{}

\begin{abstract} For a finite dimensional Lie algebra $L$, it is known that
$s(L)=\f{1}{2}(n-1)(n-2)+1-\mathrm{dim}~M(L)$ is
non negative. Moreover, the structure of all finite nilpotent Lie
algebras is characterized when $s(L)=0,1$ in \cite{ni,ni4}. In this
paper, we intend to characterize all nilpotent Lie algebra while
$s(L)=2.$
\end{abstract}

%%% ----------------------------------------------------------------------
\maketitle
%%% ----------------------------------------------------------------------

\section{Introduction}
The history of Schur multiplier of Lie algebras and the results on
it which are appeared in the recent three decades shows that why this
concept is the subject of works of several authors.

The Schur multiplier of Lie algebra , $M(L)$, have been appeared in
several papers since the beginning of 90's decade and some results on
it are obtained in \cite{ba, es, es2, es3, ha, mo,  sa, ya}. It is
of interest to obtain much more results on the Schur multiplier of
Lie algebras analogues to, $\mathcal{M}(G)$, the Schur multiplier of
group $($see \cite{kar}$)$.

Nilpotent Lie algebras take special attention of several authors, because of their similarity to
finite $p$-groups.

Let $L$ and $G$ be an $n$-dimensional nilpotent Lie algebra and a
finite $p$-group of order $p^n$, respectively. By a result of Green
\cite{gr} and Moneyhun \cite{mo},
\[t(G)=\f{1}{2}n(n-1)-\log_p|\mathcal{M}(G)|~\text{and}~
t(L)=\f{1}{2}n(n-1)-\mathrm{dim}~M(L)\] are  non negative integers.

 Analogues to the results of \cite{be, el, ni2, ni3, zh} which are obtained by  Berkovich,
  Ellis, the author and Zhou about characterizing $p$-group by the order of
its Schur multiplier when $t(G)=0,1,2,3,4,5$ the result of
\cite{es,es3, ha} are given a vast classification of finite
dimensional nilpotent Lie algebra for $t(L)\leq 8$.

 The author in \cite{ni1} gave a reduction of Green's bound for the Schur multiplier
 of non abelian $p$-group, and then he succeeded  to obtain  a similar result for
 a finite dimension  nilpotent Lie algebra in his joint paper \cite{ni}.
 More precisely, he proved the dimension of the Schur multiplier of an $n$-dimensional nilpotent Lie algebra is
 $\f{1}{2}(n-1)(n-2)+1-s(L)$, where $s(L)\geq 0$.
 The same question is remained "Can we characterize the structure of $L$ by $s(L)$?"
 The answer to this question  for $s(L)=0,1$ is positive.
 In fact, the author in \cite{ni} and \cite{ni4} showed that
 $s(L)=0$ if and only if $L\cong H(1)\oplus A(n-3)$ and $s(L)=1$ if and only if $L\cong L(4,5,2,4)$, where $H(m)$
 and $A(k)$ denote
 the Heisenberg and abelian  Lie algebra of dimension $2m+1$ and  $k$, respectively.
In the present paper, we answer to the question for $s(L)=2$.

 We seek two purposes by characterization of Lie algebras when $s(L)$ is in hand. The first one, we have a vast classification of
Lie algebras$($see \cite[Theorem 3.1]{ni} and Theorem \ref{mt}$)$. The second one, the characterization
of Lie algebras will be simplified by  $t(L)$ $($see \cite[Theorem 3.8]{ni4}$)$.

Throughout this paper all Lie algebras are of finite dimension and  $L$ is said to have the property $s(L)=2$ or briefly with $s(L)=2$, when $\mathrm{dim}~M(L)=\f{1}{2}(n-1)(n-2)-1$.
\section{Characterizing nilpotent Lie algebras with $s(L)=2$ }

In this section we intend to characterize all nilpotent Lie algebras with the property $s(L)=2$. First we introduce some useful results
which are needed in our proofs.
\begin{lem}$\mathrm{(See}$ \cite[Example 3]{es2} $\mathrm{and}$ \cite[Theorem 24]{mo}$\mathrm{).}$\label{h}
\begin{itemize}
\item[(i)]$\mathrm{dim}~(M(H(1)))=2$.
\item[(ii)]$\mathrm{dim}~(M(H(m)))=2m^2-m-1$ for all $m\geq 2$.
\end{itemize}
\end{lem}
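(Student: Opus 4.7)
The natural approach is to compute $M(H(m))$ using the Chevalley--Eilenberg cochain complex, relying on the identification $\dim M(L)=\dim H^{2}(L,k)$. Fix a basis $x_{1},\dots,x_{m},y_{1},\dots,y_{m},z$ of $H(m)$ with the only nonzero brackets $[x_{i},y_{i}]=z$, and let $\{x_{i}^{*},y_{i}^{*},z^{*}\}$ be the dual basis of $L^{*}$. The differential $d:\Lambda^{\bullet}L^{*}\to\Lambda^{\bullet+1}L^{*}$ is the graded derivation determined by $dx_{i}^{*}=dy_{i}^{*}=0$ and $dz^{*}=-\sum_{k=1}^{m}x_{k}^{*}\wedge y_{k}^{*}$.

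First I would compute $\ker d$ on $\Lambda^{2}L^{*}$. Decompose $\Lambda^{2}L^{*}=V_{1}\oplus V_{2}$, where $V_{1}$ is the span of the $2$-forms not involving $z^{*}$ (of dimension $\binom{2m}{2}=2m^{2}-m$) and $V_{2}$ is the span of those involving $z^{*}$ (of dimension $2m$). The derivation property forces $d|_{V_{1}}=0$, while on $V_{2}$ it gives
\[d(x_{i}^{*}\wedge z^{*})=\sum_{k\neq i}x_{i}^{*}\wedge x_{k}^{*}\wedge y_{k}^{*},\qquad d(y_{i}^{*}\wedge z^{*})=\sum_{k\neq i}y_{i}^{*}\wedge x_{k}^{*}\wedge y_{k}^{*}.\]
If $m=1$ both sums are empty, so $d|_{V_{2}}=0$ and $\ker d=\Lambda^{2}L^{*}$ has dimension $3$. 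If $m\geq 2$ the first family lies in the span of $3$-forms with two $x^{*}$-factors and the second in the span of those with two $y^{*}$-factors, so they live in complementary subspaces of $\Lambda^{3}L^{*}$; within each family the supports on basis $3$-forms for distinct $i$ are disjoint (the subscript $i$ is recovered as the unique $x^{*}$- or $y^{*}$-index not matched by the terminal $y^{*}$ or $x^{*}$ of the triple), so $d|_{V_{2}}$ is injective and $\ker d=V_{1}$ has dimension $2m^{2}-m$.

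Finally, $\mathrm{im}(d\colon L^{*}\to\Lambda^{2}L^{*})$ is the line $\langle dz^{*}\rangle$, which lies in $V_{1}\subseteq \ker d$ as required by $d^{2}=0$. Subtracting yields $\dim H^{2}(H(1),k)=3-1=2$ and $\dim H^{2}(H(m),k)=2m^{2}-m-1$ for $m\geq 2$, proving both parts of the lemma. The only delicate step is the support-disjointness argument establishing injectivity of $d|_{V_{2}}$ when $m\geq 2$; everything else is dimension bookkeeping. The bifurcation at $m=1$ is genuine and reflects the fact that with only one symplectic pair $(x_{1},y_{1})$ the elements $x_{1}^{*}\wedge z^{*}$ and $y_{1}^{*}\wedge z^{*}$ are cocycles that are not coboundaries, producing the two extra classes in $M(H(1))$ that are absent from the formula $2m^{2}-m-1$.
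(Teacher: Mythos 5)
Your computation is correct, but note that the paper offers no proof of this lemma at all: it is quoted directly from the references (Batten--Stitzinger, Example 3, and Moneyhun, Theorem 24), so what you have written is a self-contained verification rather than an alternative to an argument in the text. I checked the details: with $dz^{*}=-\sum_{k}x_{k}^{*}\wedge y_{k}^{*}$ and $dx_{i}^{*}=dy_{i}^{*}=0$, the antiderivation property does give $d|_{V_{1}}=0$ and $d(x_{i}^{*}\wedge z^{*})=\sum_{k\neq i}x_{i}^{*}\wedge x_{k}^{*}\wedge y_{k}^{*}$, the two families for $m\geq 2$ do lie in complementary subspaces of $\Lambda^{3}L^{*}$ (graded by the number of $x^{*}$ versus $y^{*}$ factors), and within each family the basis $3$-forms occurring for distinct $i$ are indeed pairwise distinct, since the $y$-index of a monomial $x_{i}^{*}\wedge x_{k}^{*}\wedge y_{k}^{*}$ determines $k$ and hence $i$ is the remaining $x$-index; each image is nonzero precisely when $m\geq 2$, which is exactly where the bifurcation with $m=1$ occurs. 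The dimension counts $\dim V_{1}=2m^{2}-m$, $\dim\operatorname{im}(d\colon L^{*}\to\Lambda^{2}L^{*})=1$ then give both formulas. The one step you pass over too quickly is the opening identification $\dim M(L)=\dim H^{2}(L,k)$: the multiplier in this literature is defined by the Hopf formula $M(L)=(R\cap F^{2})/[F,R]$ for a free presentation $0\to R\to F\to L\to 0$, which is $H_{2}(L,k)$, and one then invokes the standard fact (established in Batten's thesis) that for a finite-dimensional Lie algebra over a field this agrees in dimension with $H^{2}(L,k)$. With that sentence added, your argument is a complete and more explicit substitute for the citation.
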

\begin{cor}$\mathrm{(See}$ \cite[Corollary 2.3]{ni} $\mathrm{).}$\label{sr}
Let $L$ be a Lie algebra, $K$ a central ideal of
$L$ and $H=L/K$. Then
\[\mathrm{dim}~M(L)+\mathrm{dim}~(L^2\cap K)\leq \mathrm{dim}~M(H)+\mathrm{dim}~M(K)+\mathrm{dim}~(H/H^2\otimes K).\]
\end{cor}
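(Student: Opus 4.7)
The plan is to deduce the inequality from the Lie algebra analogue of the Stallings--Hopf five-term exact sequence associated to the central extension $0\to K\to L\to H\to 0$:
\[M(L)\xrightarrow{\phi}M(H)\xrightarrow{\psi}K\to L/L^2\to H/H^2\to 0.\]
Since $H/H^2\cong L/(L^2+K)$, the map $K\to L/L^2$ has image $(K+L^2)/L^2$, so its kernel is $L^2\cap K$. Exactness at $K$ then forces $\mathrm{Im}(\psi)=L^2\cap K$, and exactness at $M(H)$ gives $\dim\mathrm{Im}(\phi)=\dim M(H)-\dim(L^2\cap K)$.

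To control $\ker\phi$, I would invoke the Ganea-type exact sequence for central extensions of Lie algebras (the analogue of Ganea's sequence for groups, obtainable either via the non-abelian exterior product of Lie algebras or via the Hochschild--Serre spectral sequence),
\[(H/H^2)\otimes K\longrightarrow M(L)\xrightarrow{\phi}M(H)\longrightarrow L^2\cap K\longrightarrow 0,\]
which yields $\dim\ker\phi\leq\dim((H/H^2)\otimes K)$. Adding the two contributions,
\[\dim M(L)=\dim\ker\phi+\dim\mathrm{Im}(\phi)\leq\dim((H/H^2)\otimes K)+\dim M(H)-\dim(L^2\cap K).\]
Rearranging and appending the non-negative term $\dim M(K)$ to the right-hand side gives the statement.

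The principal obstacle is setting up (or cleanly citing) the Ganea-type exact sequence for Lie algebras; the rest is pure dimension bookkeeping. A conceptually cleaner route that makes the presence of $M(K)$ on the right-hand side natural rather than mere slack would be to run the Hochschild--Serre spectral sequence $E_2^{p,q}=H_p(H;H_q(K))\Rightarrow H_{p+q}(L)$ for trivial coefficients. Because $K$ is central, $H$ acts trivially on $K$ and on $H_2(K)=M(K)$, so the entries of total degree two reduce to $E_2^{0,2}=M(K)$, $E_2^{1,1}=(H/H^2)\otimes K$, and $E_2^{2,0}=M(H)$; bounding the filtration of $M(L)$ by the sum of the $E_\infty$-pieces (each itself a subquotient of the corresponding $E_2$-term) together with the identification of $\mathrm{Im}(\phi)$ above then produces exactly the claimed inequality.
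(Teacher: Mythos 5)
First, a point of comparison: the paper gives no proof of this corollary at all --- it is imported verbatim from \cite[Corollary 2.3]{ni} --- so your attempt can only be judged against the standard homological arguments for bounds of this type, and against the truth of the statement itself.

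Your second route, via the Hochschild--Serre spectral sequence, is correct and complete, and is essentially the right proof: in total degree two the $E_\infty$-terms are subquotients of $E_2^{0,2}=M(K)$, $E_2^{1,1}=(H/H^2)\otimes K$ and $E_2^{2,0}=M(H)$, and $E_\infty^{2,0}=\ker\big(d_2^{2,0}\colon M(H)\to K\big)$ has dimension $\dim M(H)-\dim(L^2\cap K)$ by exactness of the five-term sequence at $K$; adding the three pieces gives exactly the stated inequality. Your first route, however, contains a genuine error: the Ganea-type sequence for a central ideal $K$ begins with $(L/L^2)\otimes K$, not $(H/H^2)\otimes K$. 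As you have written it, the argument would prove the corollary with the $\dim M(K)$ term deleted, and that sharper inequality is false: take $L$ abelian of dimension $3$ and $K$ a $2$-dimensional subalgebra, so that $\dim M(L)+\dim(L^2\cap K)=3$ while $\dim M(H)+\dim(H/H^2\otimes K)=0+2=2$. The Ganea map genuinely does not factor through $(H/H^2)\otimes K$, because pairs of elements of $K$ contribute nontrivially to $M(L)$ --- which is precisely what the $M(K)$ summand records. Nor does correcting the first term rescue this route: with $(L/L^2)\otimes K$ one obtains $\dim M(L)+\dim(L^2\cap K)\le \dim M(H)+\dim(L/L^2)\cdot\dim K$, and since $\dim(L/L^2)\cdot\dim K$ exceeds $\dim M(K)+\dim(H/H^2\otimes K)$ whenever $\dim(K\cap L^2)<\frac{1}{2}(\dim K+1)$ (e.g.\ whenever $K\cap L^2=0$), this is in general strictly weaker than the corollary. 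So you should discard the Ganea argument and promote the spectral-sequence argument from ``cleaner alternative'' to the actual proof.
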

\begin{thm}\label{ds}$\mathrm{(See}$ \cite[Theorem 1]{es}$\mathrm{).}$
Let $L_1$ and $L_2$ be two Lie algebras. Then
\[\mathrm{dim}~M(L_1\oplus L_2) = \mathrm{dim}~M(L_1) + \mathrm{dim}~M(L_2)
+ \mathrm{dim}~(L_1/L_1^2\otimes L_2/L_2^2).\]
\end{thm}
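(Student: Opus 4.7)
\emph{Proof plan.}
The cleanest route is to identify the Schur multiplier with the second Chevalley--Eilenberg homology group $H_2(L,k)$ with trivial coefficients, and then to apply an algebraic K\"unneth formula. First, I would recall the Lie-algebra Hopf-formula identifications $H_0(L,k)=k$, $H_1(L,k)=L/L^2$, and $H_2(L,k)=M(L)$; these are standard and match the definition of $M(L)$ used throughout the paper (equivalently, one can take $M(L)=(R\cap F^2)/[F,R]$ from a free presentation $L=F/R$).

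Next, since the two summands commute inside $L_1\oplus L_2$, the Chevalley--Eilenberg chain complex splits as a tensor product of complexes,
\[
\Lambda^{\bullet}(L_1\oplus L_2)\ \cong\ \Lambda^{\bullet}(L_1)\otimes \Lambda^{\bullet}(L_2),
\]
with differential corresponding to $d_1\otimes 1 + 1\otimes d_2$. This identification is the key technical step: verifying it amounts to unpacking the Chevalley--Eilenberg differential on a basis of decomposable wedges and noting that every mixed bracket $[x,y]$ with $x\in L_1$, $y\in L_2$ vanishes, so no cross terms appear.

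Given that, the algebraic K\"unneth theorem over a field (where no $\mathrm{Tor}$ terms appear) yields
\[
H_n(L_1\oplus L_2,k)\ \cong\ \bigoplus_{i+j=n} H_i(L_1,k)\otimes H_j(L_2,k).
\]
Specializing to $n=2$ and substituting the identifications $H_0=k$, $H_1=L/L^2$, $H_2=M(-)$ produces
\[
M(L_1\oplus L_2)\ \cong\ M(L_1)\oplus \bigl(L_1/L_1^2\otimes L_2/L_2^2\bigr)\oplus M(L_2),
\]
and taking dimensions yields the claimed equality.

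The main obstacle I anticipate is the tensor-product splitting of the Chevalley--Eilenberg complex; once that is established, everything afterward is formal. A purely elementary alternative, staying inside the language used elsewhere in the paper, takes free presentations $L_i=F_i/R_i$ on disjoint generating sets $X_i$, presents $L_1\oplus L_2$ on $X_1\cup X_2$ with relations $R_1\cup R_2\cup[X_1,X_2]$, and applies the Hopf formula $M(L)=(R\cap F^2)/[F,R]$ directly; one then has to bookkeep the three contributions (two coming from $R_i\cap F_i^2$ modulo $[F_i,R_i]$, and a cross term coming from the ideal generated by $[X_1,X_2]$ which rearranges into $L_1/L_1^2\otimes L_2/L_2^2$). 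This avoids homological algebra but is noticeably more tedious to carry out carefully.
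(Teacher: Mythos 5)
Your proposed proof is correct, but note that the paper itself offers no proof of this statement: it is imported verbatim as Theorem 1 of Batten--Moneyhun--Stitzinger, where it is established by working with free presentations and the Hopf-type formula $M(L)=(R\cap F^2)/[F,R]$ --- essentially the ``elementary alternative'' you sketch in your last paragraph. Your main route is genuinely different and cleaner: since $[L_1,L_2]=0$ in $L_1\oplus L_2$, the Chevalley--Eilenberg complex does split as $\Lambda^{\bullet}(L_1)\otimes\Lambda^{\bullet}(L_2)$ with differential $d_1\otimes 1+1\otimes d_2$ (up to the usual sign on the second factor), the K\"unneth theorem over a field has no $\mathrm{Tor}$ terms, and the identifications $H_0=k$, $H_1(L)=L/L^2$, $H_2(L)\cong M(L)$ are standard, so the degree-two piece gives exactly the three summands claimed; finite-dimensionality (assumed throughout the paper) then converts the isomorphism into the dimension formula. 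What the homological argument buys is that it proves the full graded statement $H_n(L_1\oplus L_2)\cong\bigoplus_{i+j=n}H_i(L_1)\otimes H_j(L_2)$ at no extra cost and avoids the bookkeeping of relations modulo $[F,R]$; what the free-presentation argument buys is that it stays entirely within the definition of $M(L)$ used in this line of papers and does not require justifying the identification $M(L)\cong H_2(L,k)$, which is the one external input your proof relies on and which you should cite or verify if you write the argument out in full.
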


We are ready to obtain new results. Using notation of \cite{es3}, we have
\begin{thm}\label{t1}Let $L$ be an $n$-dimensional nilpotent Lie algebra with $s(L)=2$ and $\mathrm{dim}~L^2=2$. Then
\[\L\cong L(3,4,1,4)~\text{or}~L\cong L(4,5,2,4)\oplus A(1).\]
\end{thm}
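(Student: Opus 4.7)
The plan is to reduce the problem by passing to a codimension-one central quotient $H = L/K$ whose Schur multiplier is already classified by \cite{ni, ni4}, and then reverse-engineer $L$ as a one-dimensional central extension of $H$ by matching the numerical constraints.

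To execute this, I would first choose a one-dimensional ideal $K \subseteq L^2 \cap Z(L)$; such a $K$ exists because $L$ is nilpotent of class at least two, which forces $L^2 \cap Z(L) \ne 0$. Setting $H = L/K$, one has $\dim H = n-1$, $\dim H^2 = 1$, $\dim H/H^2 = n-2$, and $\dim(L^2 \cap K) = 1$. Plugging these into Corollary \ref{sr}, together with $\dim M(K) = 0$ and $\dim(H/H^2 \otimes K) = n-2$, yields
\[
\dim M(H) \ge \tfrac{1}{2}(n-1)(n-2) - (n-2) = \tfrac{1}{2}(n-2)(n-3),
\]
so that $s(H) \le 1$. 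Invoking the characterizations of \cite{ni, ni4} then leaves only two possibilities: either $H \cong H(1) \oplus A(n-4)$ or $H \cong L(4,5,2,4)$.

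For each possibility I would enumerate the one-dimensional central extensions of $H$ and keep those for which $\dim L^2 = 2$ and $\dim M(L) = \tfrac{1}{2}(n-1)(n-2)-1$. Here Theorem \ref{ds} together with Lemma \ref{h} is the workhorse: they compute $\dim M$ of a direct-sum decomposition exactly, so any abelian direct summand of $L$ can be peeled off and the analysis reduced to a short list of non-abelian stem extensions. I expect the main obstacle to be this enumeration in the first case, where $H(1) \oplus A(n-4)$ admits many inequivalent central extensions; one has to compute, or at least tightly estimate, $\dim M(L)$ for each candidate and discard those whose multiplier disagrees with the required value, leaving only $L(3,4,1,4)$. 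In the second case the analogous but shorter cocycle analysis should show that only the split extension $L(4,5,2,4) \oplus A(1)$ survives, pinning down $n=6$ in the process.
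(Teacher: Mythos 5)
Your opening reduction is correct and genuinely different from the paper's: choosing a one-dimensional $K\subseteq L^2\cap Z(L)$ and feeding $\dim(L^2\cap K)=1$ into Corollary \ref{sr} does give $\dim M(H)\geq\frac{1}{2}(n-2)(n-3)$, hence $s(H)\leq 1$, whereas the paper instead runs a case analysis on $\dim Z(L)$. But the way you then split into two cases is internally inconsistent. Since $K\subseteq L^2$, your quotient $H=L/K$ has $\dim H^2=1$, while $L(4,5,2,4)$ has a two-dimensional derived subalgebra (every nilpotent algebra with one-dimensional derived subalgebra is $H(m)\oplus A(k)$ by \cite[Lemma 2.1]{ni4}, and none of these has $s=1$). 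So your second case is vacuous, and both target algebras --- including $L(4,5,2,4)\oplus A(1)$, for which $(L(4,5,2,4)\oplus A(1))/K\cong H(1)\oplus A(2)$ for any line $K$ in the derived subalgebra --- must be extracted from the single case $H\cong H(1)\oplus A(n-4)$. The ``shorter cocycle analysis'' you expect to produce the second algebra does not exist.

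That leaves the real content of the theorem entirely inside the step you defer: enumerating, for every $n$, the one-dimensional central extensions $L$ of $H(1)\oplus A(n-4)$ with $K\subseteq L^2$, $\dim L^2=2$ and $\dim M(L)=\frac{1}{2}(n-1)(n-2)-1$. This is a genuine classification problem; the relevant $2$-cocycles modulo automorphisms of $H(1)\oplus A(n-4)$ form a family that grows with $n$, and Theorem \ref{ds} only helps after you already know $L$ splits off an abelian summand, which you have not established. You give no mechanism for bounding $n$ or for discarding the non-split extensions uniformly in $n$. The paper sidesteps exactly this by casing on $\dim Z(L)$ and applying Corollary \ref{sr} with differently chosen central ideals (of dimension $2$, or a complement of $L^2$ in $Z(L)$) to force numerical contradictions except when $n=4$, which gives $L(3,4,1,4)$, or when $L/K\cong L(4,5,2,4)$ with $K\cap L^2=0$, which gives $L\cong L(4,5,2,4)\oplus A(1)$. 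To repair your argument you would need either to carry out the cocycle enumeration in full or to import an argument of the paper's type that pins down $\dim Z(L)$ and $n$ first.
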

\begin{proof}
First assume that $n=4$, \cite[Lemma 3.6]{ni4} implies that $L\cong L(3,4,1,4)$.

In the case $n\geq 5$, for the sake of clarity, we divide the proof into several parts relative to dimension of
$Z(L)$.

Let $\mathrm{dim}~Z(L)=4$ and $L^2\subseteq Z(L)$, there exists a central ideal $K$ of dimension
$2$ such that $L/K$ is not abelian.
By invoking \cite[Theorem 3.1]{ni}, we have $\mathrm{dim}~{M}(L)\leq \f{1}{2}(n-2)(n-5)$, and hence
Corollary \ref{sr} implies that $\mathrm{dim}~M(L)<\f{1}{2}(n-1)(n-2)-1$ which contradicts the assumption.
The case $L^2\nsubseteq Z(L)$ is obtained similarly.

Let $\mathrm{dim}~Z(L)=2$ and $L^2=Z(L)$.  Since $s(L)=2$, we may assume that  $n\geq 6$. The proof of \cite[Theorem 3.5]{ni4} shows there exists a central ideal
of dimension $1$ such that $L/K\cong H(m)\oplus A(n-2m-2)$ and $m\geq 2$. Owning to
 Lemma \ref{h}, Corollary \ref{sr} and Theorem \ref{ds}, we should have
 \[\mathrm{dim}~M(L)<\f{1}{2}(n-1)(n-2)-1,\] which is a contradiction. One can check that the dimension of Schur multiplier of $L$ is less than $\f{1}{2}(n-1)(n-2)-1$, when $L^2\neq Z(L)$ or $\mathrm{dim}~Z(L)=1$.

 In the final case, let $\mathrm{dim}~Z(L)=3$.  Since $s(L)=2$, it is readily shown that $L^2\subseteq Z(L)$.                                          Let $K$ be the complement $L^2$ in $Z(L)$, \cite[Theorem 3.1]{ni} asserts that $\mathrm{dim}~M(L/K)\leq \f{1}{2}(n-2)(n-3)$. On the other hand,
 Corollary \ref{sr} and the assumption follow that
\[\begin{array}{lcl}\f{1}{2}(n-1)(n-2)-1&=&\mathrm{dim}~M(L)\leq\mathrm{dim}~M(L/K)+\big(L/(K\oplus L^2)\big)\otimes K \vspace{.3cm}\\&=&
\mathrm{dim}~M(L/K)+n-3,\end{array}\]
which implies that $\mathrm{dim}~M(L)=\f{1}{2}(n-2)(n-3)$, hence $L/K\cong L(4,5,2,4)$ due to
\cite[Theorem 3.7]{ni4}, and so   $L\cong  L(4,5,2,4)\oplus K$.
\end{proof}
\begin{lem}\label{l1}There is no $n$-dimensional Lie algebra with the property $s(L)=2$ and $\mathrm{dim}~L^2\geq 3$.
\end{lem}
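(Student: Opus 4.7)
My plan is to derive a contradiction by passing to a quotient $L/K$ where $K$ is a one‑dimensional central ideal contained in $L^2$, in the style of the proof of Theorem \ref{t1}.

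First I would observe that since $L$ is nilpotent with $L^2\neq 0$, the intersection $Z(L)\cap L^2$ is nonzero (the last nontrivial term of the lower central series lives there), so I may pick a one‑dimensional subspace $K$ of $Z(L)\cap L^2$. Any such $K$ is automatically a central ideal contained in $L^2$. Setting $H=L/K$, the quotient has dimension $n-1$ with $\mathrm{dim}~H^2=\mathrm{dim}~L^2-1\geq 2$; in particular $H$ is non‑abelian nilpotent. Note that since $\mathrm{dim}~L/L^2\geq 2$ for any non‑abelian nilpotent $L$, the hypothesis $\mathrm{dim}~L^2\geq 3$ forces $n\geq 5$, so the reduction is meaningful.

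Next, I would plug $K$ into Corollary \ref{sr}. Since $\mathrm{dim}~K=1$, we have $\mathrm{dim}~M(K)=0$, $\mathrm{dim}~(L^2\cap K)=1$, and $\mathrm{dim}~(L/L^2\otimes K)=n-\mathrm{dim}~L^2$. Combined with the standing hypothesis $\mathrm{dim}~M(L)=\f{1}{2}(n-1)(n-2)-1$, the corollary reduces to
\[\f{1}{2}(n-1)(n-2)\leq \mathrm{dim}~M(H)+n-\mathrm{dim}~L^2.\]

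Finally, I would invoke \cite[Theorem 3.1]{ni} on the $(n-1)$‑dimensional non‑abelian nilpotent Lie algebra $H$, whose derived subalgebra has dimension at least $2$, to obtain $\mathrm{dim}~M(H)\leq \f{1}{2}(n-2)(n-3)$---the very bound used in the third case of the proof of Theorem \ref{t1}. Substituting into the displayed inequality and simplifying gives $n-2\leq n-\mathrm{dim}~L^2$, i.e.\ $\mathrm{dim}~L^2\leq 2$, contradicting the hypothesis.

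The main obstacle I anticipate is checking that the precise hypotheses of \cite[Theorem 3.1]{ni} really are inherited by $H$; the combination of $H$ being non‑abelian nilpotent with $\mathrm{dim}~H^2\geq 2$ ought to suffice, since this is exactly the setting in which the bound $\f{1}{2}(n-2)(n-3)$ is invoked in Theorem \ref{t1}. Should that bound secretly need an additional structural hypothesis (for instance a condition on $Z(H)$), one can compensate by picking $K$ more carefully within $Z(L)\cap L^2$---e.g.\ inside a deeper term of the lower central series, or as a complemented summand---so that the needed property of $Z(H)$ is forced upon the quotient.
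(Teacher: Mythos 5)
Your argument is correct, but it takes a genuinely longer route than the paper, whose proof is a one-line direct application of \cite[Theorem 3.1]{ni} to $L$ itself: taking $k=3$ there gives $\dim M(L)\le\frac{1}{2}(n+1)(n-4)+1=\frac{1}{2}n(n-3)-1$, while $s(L)=2$ forces $\dim M(L)=\frac{1}{2}(n-1)(n-2)-1=\frac{1}{2}n(n-3)$, an immediate contradiction. You instead factor out a one-dimensional central ideal $K\subseteq Z(L)\cap L^2$, apply Corollary \ref{sr}, and then use the same theorem on the $(n-1)$-dimensional quotient $H$ with $\dim H^2\ge 2$; your arithmetic checks out (the inequality $n-2\le n-\dim L^2$ does yield $\dim L^2\le 2$), and the bound $\frac{1}{2}(n-2)(n-3)$ for $H$ requires only nilpotency and $\dim H^2\ge 2$, since the bound of \cite[Theorem 3.1]{ni} is decreasing in the dimension of the derived subalgebra --- so your worry about hidden hypotheses on $Z(H)$ is unfounded, and no careful choice of $K$ is needed. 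What your route costs is the extra machinery of the quotient construction and Corollary \ref{sr}; what the paper's route buys is brevity, because the bound of \cite[Theorem 3.1]{ni} already degrades fast enough as $\dim L^2$ grows that applying it directly to $L$ with $\dim L^2\ge 3$ finishes the job without passing to any quotient.
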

\begin{proof}Taking $k=3$ in \cite[Theorem 3.1]{ni}, we have \[\f{1}{2}(n-1)(n-2)-1=\mathrm{dim}~M(L)\leq \f{1}{2}(n+1)(n-4)+1,\] which is a contradiction.
\end{proof}
Using Theorem \ref{t1} and Lemma \ref{l1}, we may assume that $\mathrm{dim}~L^2=1$.
\begin{thm}Let $L$ be an $n$-dimensional Lie algebra and  $\mathrm{dim}~L^2=1$ with $s(L)=2$. Then $L\cong H(m)\oplus A(n-2m-1)$ and $m\geq 2$.
\end{thm}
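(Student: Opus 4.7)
The plan is to first establish a structural classification: any nilpotent Lie algebra $L$ with $\dim L^2 = 1$ is isomorphic to $H(m) \oplus A(n-2m-1)$ for some $m \geq 1$. Once this is in hand, computing $\dim M(L)$ for each such algebra via Theorem \ref{ds} and Lemma \ref{h} will single out which values of $m$ yield $s(L) = 2$.

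For the structural step, I would observe that since $L$ is nilpotent with $\dim L^2 = 1$, the ideal $L^3 = [L,L^2]$ lies inside $L^2$; if $L^3 = L^2$ then $L^k = L^2$ for all $k \geq 2$, contradicting nilpotency, so $L^3 = 0$ and hence $L^2 \subseteq Z(L)$. Identifying $L^2$ with the base field via a generator, the Lie bracket induces a non-trivial alternating bilinear form $\omega \colon L/L^2 \times L/L^2 \to L^2$ whose radical coincides with $Z(L)/L^2$. A symplectic basis $x_1, y_1, \dots, x_m, y_m$ on a complement of $Z(L)/L^2$, together with a basis of $Z(L)/L^2$, lifts to elements of $L$ that exhibit $L$ explicitly as $H(m) \oplus A(n-2m-1)$ with $m \geq 1$.

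For the computational step, Theorem \ref{ds} yields
\[\dim M(L) = \dim M(H(m)) + \binom{n-2m-1}{2} + 2m(n-2m-1),\]
using that $\dim M(A(k)) = \binom{k}{2}$ and $\dim(H(m)/H(m)^2) = 2m$. Inserting Lemma \ref{h} and simplifying, the case $m = 1$ yields $s(L) = 0$ (consistent with the earlier characterization in \cite{ni}), whereas every $m \geq 2$ produces $s(L) = 2$. Since $s(L) = 2$ by hypothesis, the case $m = 1$ is excluded, and the conclusion $L \cong H(m) \oplus A(n-2m-1)$ with $m \geq 2$ follows.

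The main obstacle is the structural classification: justifying that every nilpotent Lie algebra with $\dim L^2 = 1$ decomposes as a Heisenberg summand plus an abelian one. Once this is established, the remainder of the argument reduces to a short arithmetic verification through the direct-sum formula for the Schur multiplier and the known values of $\dim M(H(m))$.
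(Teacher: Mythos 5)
Your proposal is correct and follows essentially the same route as the paper: the paper obtains the decomposition $L\cong H(m)\oplus A(n-2m-1)$ by citing \cite[Lemma 2.1]{ni4} (which you reprove inline via the standard central-alternating-form argument), and then applies Theorem \ref{ds} and Lemma \ref{h} exactly as you do to see that $m\geq 2$ gives $\mathrm{dim}\,M(L)=\frac{1}{2}n(n-3)$, i.e.\ $s(L)=2$, while $m=1$ gives $s(L)=0$ and is excluded. The only difference is that you supply a proof of the cited structural lemma; the arithmetic step is identical.
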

\begin{proof}
Since $\mathrm{dim}~L^2=1$, \cite[Lemma 2.1]{ni4} implies that $L\cong H(m)\oplus A(n-2m-1)$.\
Using Lemma \ref{h} and Theorem \ref{ds} for all $m\geq 2$
\[\begin{array}{lcl}\mathrm{dim}~\big(M(H(m)\oplus A(n-2m-1))\big)&=&\f{1}{2}(2m^2-m-1)+(n-2m-1)(n-2m-2)\vspace{.3cm}\\&+&(n-2m-1)(2m)=\f{1}{2}n(n-3).\end{array}\]
A similar technic shows that  $\mathrm{dim}~\big(M(H(1)\oplus A(n-3))\big)=\f{1}{2}n(n-3)=2,$ as required.
\end{proof}
We summarize the results as follows.
\begin{thm} \label{mt}Let $L$ be an $n$-dimensional nilpotent Lie algebra with the property $s(L)=2$ if and only if $L$ is isomorphic to the  one of the  following Lie algebras.
\begin{itemize}
\item [(i)]$L(3,4,1,4)$;
\item[(ii)]$L(4,5,2,4)\oplus A(1)$;
\item[(iii)]$ H(m)\oplus A(n-2m-1)$ for all $m\geq 2$.
\end{itemize}
\end{thm}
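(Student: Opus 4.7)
The plan is to recognize that this final theorem is a summary assembled from the three preceding results, so the proof is essentially a case analysis on $\dim L^2$ plus a backward verification.

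For the forward direction, I would begin by observing that $s(L)=2$ forces $L$ to be non-abelian, so $\dim L^2\geq 1$. Lemma \ref{l1} immediately eliminates the possibility $\dim L^2\geq 3$. The remaining two cases are then distributed among the previous two theorems: Theorem \ref{t1} handles $\dim L^2=2$ and yields exactly the algebras in (i) and (ii), while the theorem immediately preceding this one handles $\dim L^2=1$ and gives exactly the family in (iii). So the forward direction is a one-line assembly once the correct dispatch is made.

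For the converse, I would verify that each listed algebra does indeed satisfy $s(L)=2$. For (iii), the computation $\dim M(H(m)\oplus A(n-2m-1))=\tfrac{1}{2}n(n-3)$ has already been carried out in the proof of the preceding theorem, and one checks this equals $\tfrac{1}{2}(n-1)(n-2)-1$. For (i), which is $4$-dimensional, one invokes the value of $\dim M(L(3,4,1,4))$ used in \cite[Lemma 3.6]{ni4}. For (ii), I would use Theorem \ref{ds} with $L_1=L(4,5,2,4)$ and $L_2=A(1)$: since $s(L(4,5,2,4))=1$ by \cite{ni4}, one has $\dim M(L(4,5,2,4))=\tfrac{1}{2}(5-1)(5-2)+1-1=6$, and the cross term $\dim\bigl(L(4,5,2,4)/L(4,5,2,4)^2\otimes A(1)\bigr)$ contributes $3$, giving $9=\tfrac{1}{2}(6-1)(6-2)-1$ as required.

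The main obstacle is not any deep step but rather keeping the numerical bookkeeping straight in the converse verification, especially for the direct-sum algebra in (ii), where one needs to track the dimensions of $L(4,5,2,4)$ and its derived subalgebra to apply Theorem \ref{ds} correctly. Once those bookkeeping values are in hand, the identity $\tfrac{1}{2}(n-1)(n-2)-1=\dim M(L)$ falls out immediately in every case.
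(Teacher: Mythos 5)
Your proposal is correct and matches the paper's intent exactly: the paper offers no proof beyond the phrase ``We summarize the results as follows,'' so the theorem is indeed just the assembly of Theorem \ref{t1} ($\dim L^2=2$), Lemma \ref{l1} (ruling out $\dim L^2\geq 3$), and the preceding theorem ($\dim L^2=1$). Your converse bookkeeping is also numerically sound (in particular $6+3=9=\tfrac{1}{2}(5)(4)-1$ for case (ii) and $\tfrac{1}{2}n(n-3)=\tfrac{1}{2}(n-1)(n-2)-1$ for case (iii)), and is in fact more explicit than anything the paper provides.
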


\end{document}